\newcommand{\ben}{\begin{equation}} 
\newcommand{\een}{\end{equation}}
\newcommand{\bem}{\begin{eqnarray}} 
\newcommand{\eem}{\end{eqnarray}}
\newcommand{\zd}{\ensuremath{\mathbb{Z}^{d}}} 
\newcommand{\rd}{\ensuremath{\mathbb{R}^{d}}} 
\newcommand{\cd}{\ensuremath{\mathbb{C}^{d}}} 
\newcommand{\mat}{Mat\'{e}rn}
\newtheorem{theorem}{Theorem}[section]
\newtheorem{corollary}[theorem]{Corollary}
\newtheorem{lemma}[theorem]{Lemma}
\newenvironment{proof}[1][Proof]{\noindent\textbf{#1.} }{\ \rule{0.5em}{0.5em}}
\begin{document}

\title{Uniformly bounded Lebesgue constants 
for scaled cardinal interpolation with \mat\ kernels\thanks{This work was supported by Kuwait
University, Research Grant No.\ SM01/18.}}
\author{Aurelian Bejancu\thanks{Department of Mathematics, Kuwait University, 
PO Box 5969, Safat 13060, Kuwait. \quad E-mail address: aurelianbejancu@gmail.com}
\thanks{Declarations of interest: none.}}
%\date{ }
\maketitle

\begin{abstract}
For $h>0$ and positive integers $m$, $d$, such that $m>d/2$, we study non-stationary  
interpolation at the points of the scaled grid $h\zd$ via the \mat\ kernel $\Phi_{m,d}$---the 
fundamental solution of $(1-\Delta)^m$ in \rd. We prove that the Lebesgue constants of the 
corresponding interpolation operators are uniformly bounded as $h\to0$ and deduce the  
convergence rate $O(h^{2m})$ for the scaled interpolation scheme. We also provide convergence 
results for approximation with \mat\ and related compactly supported polyharmonic kernels.
\smallskip

\noindent\textbf{Keywords:} approximation order; cardinal interpolation; compactly supported RBF; 
Lebesgue constant; \mat\  kernel; non-stationary ladder.
\smallskip

\noindent\textbf{MSC2020:} 41A05; 41A63, 41A25.
\end{abstract}

%%%%
\section{Introduction}
%%%%

Cardinal interpolation at the points of the lattice \zd\ provides an ideal model for studying multivariable 
kernel interpolation, which extends Schoenberg's theory of univariate cardinal spline interpolation 
\cite{S73}. For a kernel $\phi:\rd\to\mathbb{R}$ decaying sufficiently fast, let
\[
\mathcal{S} (\phi) := \{ \sum_{j\in\zd} c_j \,\phi (\cdot - j) : c\in\ell_\infty \} .
\]
The problem of cardinal interpolation with $\phi$ is to find, for any bounded sequence of data values 
$\{y_j\}_{j\in\zd}$, a function $s\in\mathcal{S} (\phi)$, such that $s(j)=y_j$, for all $j\in\zd$. If such a 
function exists and it is unique, cardinal interpolation with $\phi$ is deemed to be `correct', or `solvable'.

One way to analyze the approximation properties of such an interpolation method is to consider the 
associated \emph{stationary scheme} for interpolation on the scaled lattice $h\zd$, for a scaling 
parameter $h>0$. For this, consider the space of dilations 
\[
\mathcal{S}^h (\phi) = \{ s (\cdot / h) : s\in\mathcal{S} (\phi) \} ,
\]
and, for a bounded data function $f:\rd\to\mathbb{R}$, let $s_{f,h}\in\mathcal{S}^h (\phi)$ be the unique 
bounded interpolant to the values of $f$ on $h\zd$. For $k\geq0$, this interpolation method is said to 
achieve the $L_\infty$\emph{-approximation order} $k$ if, for any sufficiently smooth 
$f\in L_\infty:=L_\infty(\rd)$, we have $\|f-s_{f,h}\|_\infty=O(h^k)$, as $h\to0$, 
with the $L_\infty$-norm $\|\!\cdot\!\|_\infty$.
The last condition implies, in particular, that the $L_\infty$-distance from any such $f$ to the space 
$\mathcal{S}^h (\phi)$ also decays at the rate $O(h^k)$, hence, by a well-known result (de Boor and 
Ron \cite[Theorem 3.1]{br92}), $\phi$ must satisfy the Strang-Fix conditions of order $k$, i.e.\ its 
Fourier transform $\widehat{\phi}$ must have a zero of order $k$ at every $j\in2\pi\zd\setminus0$ 
(for $k=1$, cf.\ Buhmann \cite[Theorem 23]{mdb90}). 

In case $\phi$ is a \emph{box-spline} kernel, the approximation order of stationary cardinal 
interpolation can be elegantly expressed in terms of the direction matrix defining $\phi$ (see the 
monograph by de Boor et al.\ \cite{bhr93}). But, in general, decaying kernels (e.g.\ Gaussian, 
generalized multiquadric, or \mat\ kernels) may not satisfy the Strang-Fix conditions. In such cases, 
one may consider a \emph{non-stationary} scheme, where dilations are selected from a space 
$\mathcal{S} (\phi_h)$ based on a $h$-dependent kernel $\phi_h$, usually itself a dilation of 
$\phi$. A natural choice is $\phi_h := \phi(h\cdot)$, in which case the collection 
$\{ \mathcal{S}^h (\phi_h) \}_h$---the \emph{flat ladder} generated by $\phi$---is defined as:
\ben
\mathcal{S}^h (\phi_h) := \{ \sum_{j\in\zd} c_j \,\phi_h (h^{-1}\cdot - j) : c\in\ell_\infty \} 
= \{ \sum_{j\in\zd} c_j \,\phi (\cdot - hj) : c\in\ell_\infty \} .
\label{eq:FL}
\een

Recently, the approximation order of flat ladder interpolation on $h\zd$ has been studied, for the 
\emph{Gaussian} kernel, by Hangelbroek et al.\ \cite{hmnw12}, and, for the \emph{generalized multiquadric} 
kernel, by Hamm and Ledford \cite{hl18}. The analysis employed in these two works makes use of an 
intermediate band-limited interpolant and ultimately relies on the fact that the Fourier transform 
$\widehat{\phi}$ of the Gaussian or the generalized multiquadric kernel decays exponentially at infinity. 

Here, we propose a different method, based on bounding uniformly the associated Lebesgue 
constants, in order to obtain the rate of approximation of the flat ladder interpolation scheme with the 
\emph{\mat} kernel, whose Fourier transform decays only algebraically at infinity. For a positive integer 
$m>d/2$, the \mat\ kernel $\phi=\Phi_{m,d}$ is defined, up to a constant factor, as the fundamental 
solution of the elliptic operator $(1-\Delta)^m$ in \rd, where $\Delta$ is the Laplace operator. This kernel 
is commonly used as a covariance function in statistical modeling 
(e.g.\ Gneiting et al.\ \cite{gks10}, Chen et al.\ \cite{cwa14}). 

The basic properties of cardinal interpolation on \zd\ with the \mat\ kernel $\Phi_{m,d}$ are provided in our 
recent work \cite[Example 5.2]{ab20}, which also covers the case of non-integer $m$, as well as the related 
model of `semi-cardinal' interpolation on half-space lattices. In particular, the corresponding Lagrange 
function $\chi\in\mathcal{S} (\Phi_{m,d})$, satisfying $\chi(j) = \delta_{j0}$ for all $j\in\zd$, is shown to decay 
exponentially at infinity.

In the present paper, we establish a much stronger version of this result (Theorem \ref{th:main}), by proving 
that, for $\phi=\Phi_{m,d}$, the exponential decay of the Lagrange function for cardinal interpolation on \zd\ 
from the space $\mathcal{S} (\phi_h)$ holds with constants that are independent of the scale parameter $h$. 
As a direct consequence (Corollary \ref{cor:Lebesgue}), we derive a scale independent bound on the Lebesgue 
constant for interpolation on $h\zd$ from the space $\mathcal{S}^h (\phi_h)$ (no similar result in the non-stationary 
setting seems to have been obtained previously in the literature). This, in turn, allows us to deduce the 
convergence rate $O(h^{2m})$ for the \mat\ flat ladder interpolation scheme on $h\zd$ from the corresponding 
rate for \emph{approximation} in $\mathcal{S}^h (\phi_h)$ implied by the work of de Boor and Ron \cite{br92}. 

Section \ref{sec:CI} contains preliminary material on cardinal interpolation with \mat\ kernels, including 
the Fourier representation of the corresponding Lagrange functions. Section \ref{sec:main} proves the main 
result of the paper, Theorem \ref{th:main}, while the convergence results are obtained in section \ref{sec:conv}. 
In section \ref{sec:comp-supp}, the rate of approximation $O(h^{2m})$ by \emph{finite} linear combinations of 
shifted \mat\ kernels is also transferred to the class of compactly supported `perturbation' kernels defined by 
Ward and Unser \cite{WU14}, and two families of polyharmonic radial kernels constructed by Johnson 
\cite{mjj12} are shown to belong to this class.

%%%%
\section{Cardinal interpolation with \mat\ kernels}
%%%%
\label{sec:CI}
\addtocounter{equation}{-1}

%%%
\textbf{\mat\ kernels and their basic properties.} 
For an integer $m$, such that $m>\frac{d}{2}$, the \mat\ kernel $\Phi:=\Phi_{m,d}$ is expressed as 
\ben
\Phi (x) = \|x\|^{m-\frac{d}{2}} K_{m-\frac{d}{2}} (\|x\|), \quad x\in\rd, 
\label{eq:matern}
\een
where $K_{\nu}$ denotes the modified Bessel function of the third kind of order $\nu$. It is known 
that $\Phi$ is continuous on \rd\ and $\Phi (x) = O ( \|x\|^{m-\frac{d+1}{2}} e^{ -\|x\| } )$, as $\|x\|\to\infty$, 
hence, there exists $\alpha\in(0,1]$ and $C_0:=C_0(m,d)>0$, such that 
\ben
|\Phi ( x )| \leq C_0\, e^{-\alpha \|x\|}, \quad  x\in \rd.
\label{eq:kernel-decay}
\een
It follows from \cite[Theorem 6.13]{wend05} that the Fourier transform of $\Phi$ is given by 
\ben
\widehat{\Phi} (t) = \rho_{m,d} (1+\|t\|^2)^{-m}, \quad t\in\rd,
\label{eq:kernel-FT}
\een
for a constant $\rho_{m,d} > 0$.
\medskip

%%%
\noindent
\textbf{Non-stationary interpolation scheme.} 
For each parameter $h>0$, we let 
\ben
\Phi_h (x) := \Phi (hx),\quad x\in\rd,
\label{eq:flat-scale}
\een
and define the associated scaled shift-invariant space $\mathcal{S}^h (\Phi_h)$ via (\ref{eq:FL}).
Since the exponential decay property (\ref{eq:kernel-decay}) ensures that
\begin{equation}
\sum_{j\in \zd} \sup_{y\in [0,1]^d}  | \Phi_h (y-j) | < \infty,
\label{eq:minimal}
\end{equation}
each element of $\mathcal{S}^h (\Phi_h)$ is a continuous function on \rd, being defined by a 
series which converges uniformly on compact sets. Also, such an element is a bounded function 
on \rd. The collection $\{ \mathcal{S}^h (\Phi_h) \}_h$ is the non-stationary flat ladder 
generated by $\Phi$. 

The main problem addressed in this paper is to interpolate a data function at the points of the scaled 
grid $h\zd$ from the space $\mathcal{S}^h (\Phi_h)$, i.e.\ using series representations of 
$h\zd$-translates of $\Phi$ with bounded coefficients. Due to translation invariance, this problem 
amounts to the construction of a \emph{Lagrange function} $\chi_h\in\mathcal{S}^h (\Phi_h)$, which 
satisfies $\chi_h (hj) = \delta_{j0}$, $j\in\zd$. Note that $\chi_h$ indicates a generic dependence on $h$, 
while the specific notation of (\ref{eq:flat-scale}) applies only to $\Phi_h$.

In the sequel, it is convenient to employ the change of variables $y=h^{-1}x$, by which the above 
problem is equivalently formulated as cardinal interpolation at the points of the lattice \zd\ 
from the corresponding shift-invariant space 
\[
\mathcal{S} (\Phi_h) = \{ \sum_{j\in\zd} c_j \,\Phi_h (\cdot - j) : c\in\ell_\infty \} .
\]
The existence and uniqueness of a solution to the latter problem 
(e.g.\ Chui et al.\ \cite[Lemma~1.1]{cjw87}) depend on the non-vanishing of the \emph{symbol} 
function defined by the absolutely convergent Fourier series:
\ben
\sigma_m (t,h) := \sum_{k\in \zd} \Phi_h (k) e^{ikt}, \quad t\in\rd.
\label{eq:CI-symbol}
\een
Note that, by (\ref{eq:kernel-FT}) and usual transform laws, we have 
\ben
\widehat{\Phi_h} (t) = h^{-d} \widehat{\Phi} (h^{-1} t) 
=  \frac {\rho_{m,d} h^{2m-d}} {(h^2+\|t\|^2)^{m}}, \quad t\in\rd. 
\label{eq:kernel-FTh}
\een
Hence, since $2m>d$, an application of the Poisson Summation Formula provides 
\bem
\sigma_m (t,h) & = & \sum_{k\in\zd} \widehat{\Phi_h} (t+2\pi k) \nonumber \\
& = & \rho_{m,d} h^{2m-d} \sum_{k\in\zd} (h^2+\|t+2\pi k\|^2)^{-m} \ > \ 0, 
\label{eq:PSF}
\eem
for all $t\in\rd$ and $h>0$, the series being uniformly convergent on compact sets.

It follows that cardinal interpolation on \zd\ with $\Phi_h$ is `correct' and the corresponding Lagrange 
function $\widetilde{\chi_h}$ satisfying $\widetilde{\chi_h} (j) = \delta_{j0}$, $j\in\zd$, is given by
\ben
\widetilde{\chi_h} (y) = \sum_{k\in\zd} a^h_k \,\Phi_h (y-k),\quad y\in\rd,
\label{eq:Lag-rescaled}
\een
where, by Wiener's lemma,  $\{a^h_k\}_{k\in\zd}\in\ell_1$ is the sequence of Fourier coefficients of 
$1/\sigma_m(\cdot,h)$. Hence, reverting to $x=hy$, the above Lagrange function $\chi_h$ for interpolation 
with $\Phi$ on $h\zd$ can be identified as 
\[
\chi_h (x) = \widetilde{\chi_h} (h^{-1}x) = \sum_{k\in\zd} a^h_k \,\Phi (x-hk),\quad x\in\rd.
\]

The following lemma shows that the Fourier transform of $\widetilde{\chi_h}$ 
is the function $\omega_m (\cdot,h) (h^2+\|\!\cdot\!\|^2)^{-m}$, where, by (\ref{eq:PSF}),   
\ben
\omega_m (t,h) := \frac{1}{\sum_{k\in\zd} (h^2+\|t+2\pi k\|^2)^{-m}} 
= \frac{\rho_{m,d} h^{2m-d}}{\sigma_m (t,h)} , 
\quad t\in\rd.
\label{eq:inv-symbol-r}
\een

%%%%
\begin{lemma}		\label{le:Lag-FT}
For each $h>0$, $\omega_m (\cdot,h)$ is a continuous positive-valued function, $2\pi$-periodic in 
each of its $d$ variables, with the inverse Fourier representation:
\ben
\widetilde{\chi_h} ( y ) = \frac{1}{(2\pi)^d} \int_{\rd} e^{iyt} 
\frac {\omega_m (t,h)} {(h^2+\|t\|^2)^m} dt, \quad  y\in\rd.
\label{eq:Lag-FT}
\een
\end{lemma}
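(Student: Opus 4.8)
The plan is to verify the three asserted properties of $\omega_m(\cdot,h)$ directly from its defining series, and then to establish the Fourier representation \eqref{eq:Lag-FT} by identifying the right-hand side with the series representation \eqref{eq:Lag-rescaled} of $\widetilde{\chi_h}$. First I would check that the series $\sum_{k\in\zd}(h^2+\|t+2\pi k\|^2)^{-m}$ converges locally uniformly in $t$: since $2m>d$, comparison with $\sum_{k}\|k\|^{-2m}$ (for $\|k\|$ large, away from the finitely many $k$ with $t+2\pi k$ near $0$) gives local uniform convergence, so the sum is a continuous, strictly positive function, and $2\pi\zd$-periodicity in each variable is immediate from re-indexing the lattice sum. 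Hence $\omega_m(\cdot,h)$, being the reciprocal of this function, is continuous, positive, and $2\pi$-periodic in each variable; the second equality in \eqref{eq:inv-symbol-r} is just \eqref{eq:PSF}.

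For the integral representation, I would start from the series \eqref{eq:Lag-rescaled}, $\widetilde{\chi_h}(y)=\sum_{k\in\zd}a^h_k\,\Phi_h(y-k)$, where $\{a^h_k\}\in\ell_1$ are the Fourier coefficients of $1/\sigma_m(\cdot,h)$. Because $\{a^h_k\}\in\ell_1$ and $\Phi_h\in L_1(\rd)$ (by the exponential decay \eqref{eq:kernel-decay}), the series converges in $L_1$, so its Fourier transform is obtained termwise: using $\widehat{\Phi_h(\cdot-k)}(t)=e^{-ikt}\widehat{\Phi_h}(t)$ and \eqref{eq:kernel-FTh},
\[
\widehat{\widetilde{\chi_h}}(t)=\Bigl(\sum_{k\in\zd}a^h_k e^{-ikt}\Bigr)\widehat{\Phi_h}(t)
=\frac{1}{\sigma_m(t,h)}\cdot\frac{\rho_{m,d}h^{2m-d}}{(h^2+\|t\|^2)^m}
=\frac{\omega_m(t,h)}{(h^2+\|t\|^2)^m},
\]
the last step by the definition \eqref{eq:inv-symbol-r} of $\omega_m$. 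Since $\omega_m(\cdot,h)$ is bounded (it is continuous and $2\pi$-periodic, hence bounded on a period cube) and $(h^2+\|t\|^2)^{-m}\in L_1(\rd)$ because $2m>d$, the right-hand side above is an $L_1$ function, so Fourier inversion applies and yields \eqref{eq:Lag-FT}, the continuity of $\widetilde{\chi_h}$ (already noted after \eqref{eq:minimal}) guaranteeing that the inversion holds pointwise everywhere.

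The only point requiring a little care is the justification that the Fourier transform may be computed term by term and that inversion is legitimate; I expect this to be the main (though routine) obstacle. It is handled by the $\ell_1$-summability of $\{a^h_k\}$ combined with $\Phi_h\in L_1$, which places $\widetilde{\chi_h}$ in $L_1$ and its transform in $L_1$ as well, so that the classical inversion theorem applies without any principal-value or tempered-distribution subtleties. Everything else is bookkeeping with the Poisson Summation Formula \eqref{eq:PSF} already established in the text.
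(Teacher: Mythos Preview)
Your proposal is correct and follows essentially the same route as the paper: both use the $\ell_1$-summability of $\{a^h_k\}$ together with $\widehat{\Phi_h}\in L_1$ to pass between the series \eqref{eq:Lag-rescaled} and the integral \eqref{eq:Lag-FT}, identifying $\sum_k a^h_k e^{-ikt}=1/\sigma_m(t,h)$ and then invoking \eqref{eq:kernel-FTh} and \eqref{eq:inv-symbol-r}. The only cosmetic difference is that the paper substitutes the Fourier inversion formula for each $\Phi_h(y-k)$ and swaps sum with integral, whereas you first compute $\widehat{\widetilde{\chi_h}}$ termwise and then invert; these are the same computation in a different order.
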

%%%%

%%%
\begin{proof}
The continuity of $\omega_m (\cdot,h)$ is a consequence of the uniform convergence 
of the series (\ref{eq:PSF}) on compact sets. By (\ref{eq:kernel-FTh}), we have 
$\widehat{\Phi_h} \in L_1 (\rd)$. Hence, using the Fourier inversion formula and the fact that 
$\{a^h_k\}_{k\in\zd}$ is absolutely summable, (\ref{eq:Lag-rescaled}) implies 
\begin{eqnarray*}
\widetilde{\chi_h} (y) 
& = & \frac{1}{(2\pi)^d} \sum_{k\in\zd} a^h_k \int_{\rd} e^{it(y-k)} \, \widehat{\Phi_h} (t) \, dt  \\ 
& = & \frac{1}{(2\pi)^d} \int_{\rd} e^{iyt} \, \widehat{\Phi_h} (t) \sum_{k\in\zd} a^h_k \, e^{-itk} \, dt, 
\quad y\in\rd.
\end{eqnarray*}
Since $\sum_{k\in\zd} a^h_k \, e^{-itk} = [\sigma_m(t,h)]^{-1}$, the required representation follows 
via (\ref{eq:kernel-FTh}) and (\ref{eq:inv-symbol-r}).
\end{proof}
%%%
\medskip

\noindent
%%%
\textbf{Remark.} 
The inverse symbol $\omega_m$ is well-defined by the middle fraction of (\ref{eq:inv-symbol-r}) 
even for $h=0$. In this case, $\omega_m$ acquires a zero at $t=0$ of the same order as the 
denominator of the integrand in (\ref{eq:Lag-FT}). This integral representation, for $h=0$, was 
used by Madych and Nelson \cite{mn90} as definition of the Lagrange 
function for cardinal interpolation with the $m$-harmonic kernel 
\[
\Phi_0 (x) = \left\{
\begin{array}{ll}
\|x\|^{2m-d} \ln \|x\|, & \mathrm{if}\ d\ \mathrm{is\ even}, \\
\|x\|^{2m-d},  & \mathrm{if}\ d\ \mathrm{is\ odd}, 
\end{array}
\right.
\quad x\in\mathbb{R}^d,
\]
for which the cardinal symbol (\ref{eq:CI-symbol}) cannot be defined classically. 
%%%

%%%%
\section{Scale independent exponential decay}
%%%%
\label{sec:main}
\addtocounter{equation}{-11}

For each $h>0$, estimate (\ref{eq:kernel-decay}) implies that the kernel $\Phi_h$ decays exponentially: 
$|\Phi_h ( y )| \leq C_0\, e^{-\alpha h\|y\|}$, $y\in\rd$. Hence, by \cite[Theorem 2.7]{ab20}, this decay is 
transferred to the Lagrange function $\widetilde{\chi_h}$ as
$|\widetilde{\chi_h} ( y )| \leq A_h\, e^{-B_h \|y\|}$, $y\in \rd$, for some positive 
constants $A_h$, $B_h$ that, \emph{a priori}, may depend on $h$.

The main result of this paper, stated next, asserts that the exponential decay of $\widetilde{\chi_h}$ 
actually holds with constants independent of $h\in(0,1]$.

%%%%
\begin{theorem}	\label{th:main}
There exist $A, B>0$, depending only on $d$ and $m$, such that
\ben
|\widetilde{\chi_h} ( y )| \leq A\, e^{-B |y|}, \quad y\in\rd,\ h\in(0,1],
\label{eq:scale-indep}
\een
where $|y| = |y_1| + \cdots + |y_d|$.
\end{theorem}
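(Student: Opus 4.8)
The plan is to work entirely on the Fourier side, using the integral representation \eqref{eq:Lag-FT} together with the standard principle that exponential decay of a function in physical space is equivalent to analyticity of its Fourier transform in a strip; here the width of the strip must be controlled uniformly in $h$. Concretely, one fixes a coordinate direction, say $y_1$, and shifts the contour of integration in the $t_1$-variable from $\mathbb{R}$ to $\mathbb{R} + i\beta$ for a small fixed $\beta>0$ independent of $h$. The factor $e^{iy_1 t_1}$ then produces $e^{-\beta y_1}$, and iterating over all $d$ coordinates yields the claimed bound $A e^{-B|y|}$ with $|y| = |y_1| + \cdots + |y_d|$, provided the shifted integral converges with a bound independent of $h$ and the contour shift is justified.

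The key steps, in order, are: (i) Show that the integrand $g_m(t,h) := \omega_m(t,h)(h^2+\|t\|^2)^{-m}$, as a function of a single complex variable $t_j$ (the other $t_k$ real), extends analytically to the strip $|\operatorname{Im} t_j| < \beta$ for some $\beta \in (0,1)$ depending only on $m,d$ — crucially \emph{not} on $h$. The factor $(h^2 + \|t\|^2)^{-m}$ extends holomorphically as long as $h^2 + \|t\|^2$ avoids $(-\infty,0]$; writing $t_j = s_j + i\eta$ one needs $h^2 + \sum_{k\neq j} t_k^2 + (s_j+i\eta)^2$ to stay off the negative real axis, which holds for all $h\geq 0$ once $|\eta|<\beta$ and $\beta$ is small, since the $h^2$ term only helps. (ii) Show the analogous analytic continuation and uniform bound for $\omega_m(t,h) = \big(\sum_{k\in\zd}(h^2 + \|t+2\pi k\|^2)^{-m}\big)^{-1}$; here one must check that the denominator series extends holomorphically and, more importantly, stays bounded away from $0$ uniformly in $h\in(0,1]$ and in $\operatorname{Im} t_j \in (-\beta,\beta)$ — this is where the $2\pi$-periodicity and the fact that $\operatorname{Re}(h^2 + \|t+2\pi k\|^2)$ can be bounded below using only the real parts will be used, together with the observation that the $k=0$ term alone already gives a lower bound of the right order once $\beta$ is small. (iii) Estimate the shifted integral: $\int_{\mathbb{R}^d} |g_m(s + i\beta e_j, h)|\, ds \leq $ const uniformly in $h$, using the algebraic decay $|g_m| = O(\|t\|^{-2m})$ at infinity (with $2m > d$) together with the uniform lower bound on $\omega_m$; this also justifies the contour shift via Cauchy's theorem, since there are no poles in the strip and the integrand decays at $\operatorname{Re} t_j \to \pm\infty$. (iv) Assemble the $d$ one-dimensional shifts and conclude, taking $B = \beta$ and $A$ the resulting uniform constant.

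The main obstacle I expect is step (ii): controlling $\omega_m(t,h)$ — equivalently, bounding $\sigma_m(t,h)$, or its analytic continuation, \emph{away from zero} uniformly in both $h\in(0,1]$ and the imaginary shift. On the real torus this is exactly the content of \eqref{eq:PSF}, but once $t_j$ acquires an imaginary part the individual terms $(h^2+\|t+2\pi k\|^2)^{-m}$ are complex, so positivity is lost and one must instead show that the real part (or the modulus) of the series stays bounded below. The natural approach is to isolate the dominant term (the $k$ minimizing $\|\operatorname{Re}(t) + 2\pi k\|$, which by periodicity we may take with $\operatorname{Re} t$ in a fundamental cell) and show that for $\beta$ small this term dominates the tail: its modulus is $\gtrsim (1 + \|\operatorname{Re} t\|^2)^{-m} \gtrsim$ const on the cell (the $h^2$ only increasing the lower bound's robustness is false — actually $h^2$ appears additively so makes $|h^2 + \cdots|$ potentially larger, which is fine for a lower bound on... one must be careful), while the contribution of the imaginary shift to each term is a lower-order perturbation. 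Handling the boundary of the fundamental cell, where two terms may be comparable, and making the perturbation estimate uniform in $h$ down to $h\to 0$ (where one recovers the $m$-harmonic case of the Remark, with $\omega_m$ vanishing at $t=0$ — note the decay estimate is only needed away from such degeneracies, or rather the vanishing of $\omega_m$ at the origin for $h=0$ does not spoil the \emph{upper} bound on $\widetilde{\chi_h}$) will be the delicate part of the argument.
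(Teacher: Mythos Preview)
Your overall strategy---contour shifting in the Fourier representation \eqref{eq:Lag-FT} to convert analyticity in a tube into exponential decay---is exactly the paper's approach, and your steps (iii)--(iv) match the paper's endgame. However, there is a genuine gap, and it lies in step (i), not in step (ii) where you anticipate the difficulty.

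You assert that $(h^2+\|t\|^2)^{-m}$, with one variable complexified to $t_j = s_j + i\eta$ and the others real, stays off the cut $(-\infty,0]$ ``for all $h\ge 0$ once $|\eta|<\beta$ and $\beta$ is small, since the $h^2$ term only helps.'' This is false: take the other coordinates and $s_j$ equal to zero; then $q_h(z) = h^2 - \eta^2$, which is a negative real number whenever $h < |\eta|$. Hence for \emph{any} fixed strip half-width $\beta>0$, the factor $[q_h(z)]^{-m}$ has genuine poles inside the tube for every $h\in(0,\beta)$, and your proposed separation of the integrand into the two factors $\omega_m$ and $[q_h]^{-m}$ cannot work uniformly as $h\to 0$. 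Correspondingly, in step (ii) the denominator series has a pole (from the $k=0$ term) at the same points, so $\omega_m$ has a zero there; the two singularities cancel in the product, but neither factor extends holomorphically on its own.

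The paper's remedy (Lemma~\ref{le:madych}) is precisely to avoid separating the factors and instead exploit the identity
\[
\frac{\omega_m(z,h)}{[q_h(z)]^m} \;=\; \frac{1}{1 + [q_h(z)]^m\, G_{m,h}(z)},
\qquad
G_{m,h}(z) := \sum_{k\in\zd\setminus\{0\}} \frac{1}{[q_h(z+2\pi k)]^m}.
\]
The right-hand side is manifestly analytic near the zeros of $q_h$: the tail $G_{m,h}$ is analytic and uniformly bounded on $Q_\alpha^d$ for small $\alpha$ (since $q_h(z+2\pi k)\ne 0$ there for $k\ne 0$), the product $[q_h(z)]^m G_{m,h}(z)$ is jointly continuous in $(z,h)$ on the compact set $Q_\alpha^d\times[0,1]$, and on the real slice one has $1+[q_h(t)]^m G_{m,h}(t)\ge 1$. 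A compactness argument then forces $1+[q_h(z)]^m G_{m,h}(z)\ne 0$ on a possibly thinner tube, uniformly for $h\in[0,1]$, including $h=0$. This is your ``dominant term plus tail'' heuristic, but recast so that the pole--zero cancellation is built in from the outset rather than left to be verified. With this identity in hand, the shifted integral is bounded exactly as you outline: the region of small $\|t\|$ by continuity of the right-hand side above, and the region of large $\|t\|$ by the periodic boundedness of $\omega_m$ together with the $O(\|t\|^{-2m})$ decay.
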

%%%%

For $d=1$, this result was proved by Bejancu et al.\ \cite[Theorem 4.1]{bkr07} in the quite different 
setting of multivariate `polyspline' interpolation of continuous data prescribed on equally spaced 
parallel hyperplanes. In fact, in that context, $h$ does not play the role of a scaling parameter, 
denoting instead the norm of a certain frequency variable $\xi$.

As in \cite{bkr07}, our proof of Theorem~\ref{th:main} uses the Fourier transform representation of 
$\widetilde{\chi_h}$ given in Lemma~\ref{le:Lag-FT}. The main technical ingredient is 
Lemma~\ref{le:madych} below (which extends \cite[Lemma~3.1]{bkr07}, for $d=1$), based on ideas 
of Madych and Nelson \cite[Lemma 1]{mn90}. To state it, we introduce the notation:
\ben
q_h (z) := z_1^2 + \cdots + z_d^2 + h^2, \quad z=(z_1,\ldots,z_d)\in\cd,\ h\geq0.
\label{eq:squares}
\een
Also, for a given set $\Omega\subset\mathbb{R}$ and a positive number $\alpha$, we let
\[
\Omega_\alpha = \{ \zeta\in\mathbb{C} : \mathrm{Re}\,\zeta\in \Omega\ \mathrm{and}\ 
\mathrm{Im}\,\zeta\in(-\alpha,\alpha) \}.
\]
The Cartesian product of $d$ copies of $\Omega_\alpha$ is denoted by 
$\Omega_\alpha^d := (\Omega_\alpha)^d\subset\cd$.

The next lemma extends the symbol function $\omega_m (\cdot,h)$ defined by 
(\ref{eq:inv-symbol-r}) as an analytic function of $z$ on a certain tube $\rd_\alpha\subset\cd$:
\ben
\omega_m (z,h) := \left( \sum_{k\in\zd} \frac{1}{[q_h (z+2\pi k)]^m} \right)^{-1}.
\label{eq:inv-symbol}
\een
%

%%%%
\begin{lemma}		\label{le:madych}
Let $Q:=[-\pi,\pi]$. There exists $\alpha:=\alpha(d,m)>0$ such that, for all $h\in[0,1]$, 
$\omega_m (z,h)$ and $\omega_m (z,h) [q_h (z)]^{-m}$ are analytic functions of 
$z$ in the common tube $\rd_\alpha$.
\end{lemma}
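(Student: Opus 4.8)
The plan is to analyze the denominator series $\sum_{k\in\zd} [q_h(z+2\pi k)]^{-m}$ and show that, for $z$ in a sufficiently thin tube $\rd_\alpha$, it is an analytic function with no zeros and no poles. Analyticity of each individual term $[q_h(z+2\pi k)]^{-m}$ on the tube requires that $q_h(z+2\pi k)$ avoid the branch locus; since $q_h(\zeta) = \zeta_1^2 + \cdots + \zeta_d^2 + h^2$ and we need $q_h \neq 0$, I would first establish a lower bound on $|q_h(w)|$ valid for all $w$ with $\mathrm{Re}\,w$ in a fundamental region and $|\mathrm{Im}\,w_j| < \alpha$. Writing $w = u + iv$ with $u,v\in\rd$, we have $\mathrm{Re}\,q_h(w) = \|u\|^2 - \|v\|^2 + h^2$ and $\mathrm{Im}\,q_h(w) = 2 u\cdot v$. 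The only way $q_h(w)$ can be near $0$ is when $\|u\|$ is small (comparable to $\|v\|$, hence to $\alpha$), in which case $\mathrm{Re}\,q_h(w) \geq h^2 - d\alpha^2 \geq h^2 - d\alpha^2$; but this is not bounded below by a positive constant when $h\to 0$. This is the crux: for the single nearest term ($k=0$, $z$ near the origin) one cannot get a uniform positive lower bound on $|q_h(z)|$, only on $|q_h(z)|$ away from a neighborhood of $z=0$, combined with the fact that near $z=0$ the quantity $q_h(z)$ stays in a region where $z\mapsto q_h(z)^{-m}$ is still analytic (a neighborhood of the positive real axis, cut away from $(-\infty,0]$).

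Concretely, I would choose $\alpha$ small enough (depending only on $d$) that: (i) for every $z\in\rd_\alpha$ and every $k\in\zd$, the point $q_h(z+2\pi k)$ lies in the slit plane $\mathbb{C}\setminus(-\infty,0]$, so that the principal branch of $w\mapsto w^{-m}$ is analytic there — for $k=0$ with small real part this uses $\mathrm{Im}\,q_h$ small together with $\mathrm{Re}\,q_h = \|u\|^2 - \|v\|^2 + h^2$ staying positive once $\alpha$ is small relative to... actually here one notes that if $\|u\|$ and $\|v\|$ are both $O(\alpha)$ then $\mathrm{Re}\,q_h \geq h^2 - d\alpha^2$ which can be negative, so instead one argues that $q_h(z)$ lies in a sector $|\arg w| < \pi/2 + \epsilon$ around the positive axis: indeed $\|u\cdot v\| \leq \|u\|\|v\|$ and a short computation bounds $|\mathrm{Im}\,q_h|/|\mathrm{Re}\,q_h|$ away from controlling the full half-plane only when $\|u\|$ is comparable to $\|v\|$, and there the whole quantity has modulus bounded below by a constant times $(\|v\|^2 + h^2)$ hmm — more robustly, the correct and standard move (as in Madych–Nelson) is to bound $|q_h(z+2\pi k)| \geq c(1 + |k|^2 + h^2)$ for $k\neq 0$ and to handle $k=0$ separately, showing $q_h(z)$ stays in a fixed compact subset of the slit plane when $|\mathrm{Re}\,z|\leq\pi$.

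Next, with the branch issue resolved, I would establish the two-sided estimate
\ben
c_1 (1 + |k|^2 + h^2) \leq |q_h(z+2\pi k)| \leq c_2 (1 + |k|^2 + h^2), \quad k\in\zd\setminus0,
\label{eq:qest-plan}
\een
uniformly for $z\in\rd_\alpha$ and $h\in[0,1]$, with $c_1,c_2$ depending only on $d,\alpha$; this follows from $\|\mathrm{Re}(z+2\pi k)\| \geq 2\pi|k| - \pi\sqrt d - $ (something) for the fundamental cell, i.e.\ $\mathrm{Re}(z+2\pi k)$ grows linearly in $|k|$. Summing, $\sum_{k\neq0} |q_h(z+2\pi k)|^{-m} \leq c_2^{-m} \sum_{k\neq 0}(1+|k|^2)^{-m} < \infty$ since $2m > d$, and the series converges uniformly on $\rd_\alpha$, hence defines an analytic function there (Weierstrass $M$-test plus Morera/termwise analyticity). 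Adding the single analytic term $[q_h(z)]^{-m}$ gives that $\Psi_m(z,h) := \sum_k [q_h(z+2\pi k)]^{-m}$ is analytic on $\rd_\alpha$. Finally I must show $\Psi_m(z,h) \neq 0$ there: on the real tube it is positive by \eqref{eq:PSF}, and by shrinking $\alpha$ further I would show the real part $\mathrm{Re}\,\Psi_m(z,h)$ stays positive — estimating $|\Psi_m(z,h) - \Psi_m(\mathrm{Re}\,z, h)|$ is small relative to $\Psi_m(\mathrm{Re}\,z,h) \geq \Psi_m$ evaluated on the reals, using that $\Phi$ on the reals has a fixed positive lower bound on $Q^d$ by the $h\to 0$ limit comparison, or more directly that the $k=0$ term alone has $\mathrm{Re}[q_h(z)]^{-m}$ bounded below once $q_h(z)$ is confined to a small sector about the positive axis. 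Then $\omega_m(z,h) = 1/\Psi_m(z,h)$ is analytic and nonzero, and $\omega_m(z,h)[q_h(z)]^{-m}$ is a product of two analytic functions, hence analytic, on $\rd_\alpha$.

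The main obstacle, as indicated, is the $k=0$ term near $z=0$ and $h\to 0$: one cannot control $|q_h(z)|$ from below uniformly, so the analyticity of $[q_h(z)]^{-m}$ must be obtained not from a lower bound on the modulus but from confining $q_h(z)$ to the slit plane (a sector around $\mathbb{R}_{>0}$), and the nonvanishing of the full series $\Psi_m$ must then come from a perturbation argument off the real tube rather than from termwise positivity. Choosing $\alpha$ uniformly in $h\in[0,1]$ — in particular so that all of the above survives the degenerate limit $h=0$ — is the delicate point, and it is exactly where the Madych–Nelson-type argument (via Lemma~1 of \cite{mn90} and its one-dimensional extension in \cite[Lemma~3.1]{bkr07}) is needed.
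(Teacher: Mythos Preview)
Your plan has a genuine gap at exactly the point you flag as the ``main obstacle,'' and the resolution you propose does not work.

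First, a minor but telling confusion: $m$ is a positive \emph{integer} here, so $[q_h(z)]^{-m}$ involves no branch cut and the whole ``slit plane / sector'' discussion is beside the point. The only issue is whether $q_h(z)$ vanishes. And it does: for any $\alpha>0$ and any $h\in(0,\alpha)$, the point $z=(ih,0,\ldots,0)$ lies in $\rd_\alpha$ and satisfies $q_h(z)=-h^2+h^2=0$. So $[q_h(z)]^{-m}$ has a pole inside every tube $\rd_\alpha$ once $h<\alpha$, and your sentence ``adding the single analytic term $[q_h(z)]^{-m}$ gives that $\Psi_m(z,h)$ is analytic on $\rd_\alpha$'' is simply false. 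Consequently your final step, calling $\omega_m(z,h)[q_h(z)]^{-m}$ ``a product of two analytic functions,'' also fails: one factor is not analytic.

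The paper's cure is precisely to avoid ever treating $[q_h(z)]^{-m}$ as a stand-alone analytic object. One separates off the $k=0$ term and writes
\[
\frac{\omega_m(z,h)}{[q_h(z)]^m}\;=\;\frac{1}{1+[q_h(z)]^m\,G_{m,h}(z)},\qquad
G_{m,h}(z):=\sum_{k\neq 0}[q_h(z+2\pi k)]^{-m}.
\]
Now $[q_h(z)]^m$ is a polynomial and $G_{m,h}$ is analytic on $Q_\alpha^d$ by the uniform tail estimate you already outlined, so the denominator on the right is an \emph{entire} function of $z$ with no poles at all. On the real cube $Q^d$ it equals $1+[q_h(t)]^m G_{m,h}(t)\geq 1$ for every $h\geq 0$; since it is jointly continuous in $(z,h)$ on $Q_\alpha^d\times[0,1]$, compactness lets one shrink $\alpha$ so that it stays bounded away from $0$ on the whole product. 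This single identity delivers analyticity of $\omega_m[q_h]^{-m}$ directly, and multiplying by the polynomial $[q_h(z)]^m$ gives analyticity of $\omega_m$. No perturbation-off-the-real-axis estimate of $\Psi_m$ itself is needed, and the zero locus of $q_h$ is rendered harmless rather than argued around.
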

%%%%

%%%%
\begin{proof}
For the sake of exposition, we split the proof in three parts.

1. Let $\alpha\in(0,\sqrt{(\pi^2-1)/d}]$. Then we claim that, for all $h\in [0,1]$, $z\in Q_\alpha^d$, and 
$k\in\zd\setminus\{0\}$, we have $q_h (z+2\pi k)\not=0$, and the series 
\[
G_{m,h} (z) := \sum_{k\in\zd\setminus\{0\}} \frac {1} {[q_h (z+2\pi k)]^m},
\]
is absolutely and uniformly convergent, so analytic, for $z\in \mathring{Q}_\alpha^d$,
$\mathring{Q}:=(-\pi,\pi)$.

Indeed, for all $k\in\zd\setminus\{0\}$, $h\geq0$, and $z=u+iv \in Q_\alpha^d$ (i.e.\ $|u_p|\leq\pi$, 
$|v_p|\leq\alpha$, for all $p=1,\ldots,d$), we have
\ben
| q_h (z+2\pi k) | \geq \mathrm{Re} [ q_h (z+2\pi k) ] 
= \| u+2\pi k \|^2 - \|v\|^2 + h^2 \geq \pi^2 - d\alpha^2 \geq 1.
\label{eq:madych1}
\een
Also, if $\| k \| \geq \sqrt{d}$, then $\|u+2\pi k\| \geq 2\pi\|k\| - \|u\| \geq \pi (2\|k\|-\sqrt{d}) > 0$ and 
$\|v\|^2 < d\alpha^2 < \pi^2$ imply
\ben
| q_h (z+2\pi k) | \geq \| u+2\pi k \|^2 - \|v\|^2 + h^2 
> \pi^2 (2\|k\|-\sqrt{d})^2 - \pi^2.
\label{eq:madych2}
\een
Thus, using (\ref{eq:madych1}) for the terms of index $k\in\zd\setminus\{0\}$ satisfying $\| k \| < \sqrt{d}$, 
and (\ref{eq:madych2}) for the terms corresponding to $\| k \| \geq \sqrt{d}$, we obtain the estimate 
\bem
| G_{m,h} (z) | 
& \leq & \sum_{0 < \| k \| < \sqrt{d}}  | q_h (z+2\pi k) |^{-m} 
+ \sum_{\| k \| \geq \sqrt{d}}  | q_h (z+2\pi k) |^{-m}  \nonumber \\
& \leq & N + \pi^{-2m} \sum_{\| k \| \geq \sqrt{d}}  [(2\|k\|-\sqrt{d})^2 - 1]^{-m} =: M, 
\label{eq:madych3}
\eem
where $N:=N(d)$ is the number of indices $k\in\zd\setminus\{0\}$ such that $\| k \| < \sqrt{d}$, and 
$M:=M(d,m)<\infty$, since $2m>d$. This estimate implies that the above claim is true.

2. Next, we note that, by its definition (\ref{eq:squares}), $q_h (z)$ is continuous as a function of 
$(z,h)\in\cd\times[0,\infty)$, while $G_{m,h} (z)$ is continuous as a function of 
$(z,h)\in Q_\alpha^d\times[0,\infty)$, since the estimate (\ref{eq:madych3}) is valid for all $h\geq0$. 
Hence, the product $[q_h (z)]^m G_{m,h} (z)$ is continuous for $(z,h)\in Q_\alpha^d\times[0,\infty)$. 
Further, since this product is nonnegative for $z:=t\in Q^d$ and $h\geq0$, we have 
\[
1+[q_h (t)]^m G_{m,h} (t)\geq 1,\quad t\in Q^d, \ h\geq0.
\]
Using the fact that $Q^d\times [0,1]$ is compact, it follows, by reducing $\alpha$ if necessary, that
the function $1+[q_h (z)]^m G_{m,h} (z)$, which is continuous in variables $(z,h)$, remains bounded 
away from zero in modulus on the set $Q_\alpha^d\times[0,1]$, hence
\ben
1+[q_h (z)]^m G_{m,h} (z)\not=0,\quad (z,h)\in Q_\alpha^d\times [0,1].
\label{eq:madych4}
\een

3. From parts 1 and 2 above, we deduce that, for $z\in Q_\alpha^d$ and $h\in[0,1]$, the definition 
(\ref{eq:inv-symbol}) provides $\omega_m (z,h) \in \mathbb{C}$, if $q_h (z)\not=0$. We also let 
$\omega_m (z,h) := 0$, if $q_h (z)=0$. Therefore, the following identity holds:
\ben
\frac {\omega_m (z,h)} {[q_h (z)]^m} = \frac {1} {1 + [q_h (z)]^m G_{m,h} (z)}, 
\quad z\in Q_\alpha^d,\ h\in[0,1],
\label{eq:madych5}
\een
where the left-hand side is assigned the value $1$, if $q_h (z)=0$. This shows that 
$\omega_m (z,h) [q_h (z)]^{-m}$, hence $\omega_m (z,h)$ as well, are analytic functions of 
$z$ on $\mathring{Q}_\alpha^d$, for all $h\in [0,1]$. By its periodicity and continuity on $Q_\alpha^d$, 
$\omega_m (z,h)$ extends analytically to $\mathbb{R}_\alpha^d$ as a function of $z$, for all $h\in [0,1]$. 
Now, (\ref{eq:madych1}) implies that $q_h (z)\not=0$ for $z\in\mathbb{R}_\alpha^d\setminus Q_\alpha^d$, hence 
$\omega_m (z,h) [q_h (z)]^{-m}$ is also analytic on $\mathbb{R}_\alpha^d$ as a function of $z$, 
for all $h\in [0,1]$. 
\end{proof}
%%%%
\medskip

\noindent
%%%
\textbf{Remark.} It is possible to establish Lemma~\ref{le:madych} for all $h\geq0$, by replacing the 
above compactness argument for (\ref{eq:madych4}) with the extension to arbitrary $d$ of the  
inequality \cite[(14)]{bkr07}.
%
%%%
\medskip

\noindent
%%%
\textbf{Proof of Theorem~\ref{th:main}.} 
Let $\alpha$ be the value provided by Lemma~\ref{le:madych}, 
which implies that the integrand of the FT representation (\ref{eq:Lag-FT}) can be extended as an 
analytic function in the common tube $\mathbb{R}_\alpha^d$, for all $h\in [0,1]$. We pick 
$B\in(0,\alpha)$ and proceed to prove (\ref{eq:scale-indep}), with some $A>0$, for all $h\in(0,1]$.

To this aim, for $y\in\rd$, we intend to estimate the integral (\ref{eq:Lag-FT}), after changing its contour of 
integration from $\rd$ to $\rd + i \gamma$, where $\gamma=(\gamma_1,\ldots,\gamma_d)$, 
$\gamma_p = \pm B$, and $\gamma_p$ has the same sign as $y_p$, for $p\in\{1,\ldots,d\}$ (this sign 
choice being irrelevant if $y_p=0$). We will also employ the notation 
\[
\Psi_{m,h} (z) := \omega_m (z,h) [q_h (z)]^{-m}, \quad z\in\mathbb{R}_\alpha^d,\ h\in [0,1].
\]

The change of contour is obtained in $d$ steps, via successive applications of Cauchy's Theorem. In the 
first step, this theorem implies
\ben
\int_{\Gamma_1} e^{i y_1 z_1} \left( \int_{\mathbb{R}^{d-1}} e^{i (y_2 t_2+\cdots+y_d t_d)} \, 
\Psi_{m,h} (z_1,t_2,\ldots,t_d) \, dt_2\ldots dt_d \right) dz_1 = 0,
\label{eq:Cauchy1}
\een
where $\Gamma_1\subset \mathbb{C}$ is the rectangle of horizontal (long) sides $[-R,R]$ and 
$[-R,R]+i\gamma_1$, and corresponding vertical sides $\pm R + i\gamma_1 [0,1]$. The integral inside 
brackets is analytic in $z_1$, since $\Psi_{m,h} (z_1,t_2,\ldots,t_d)$ has this property for each 
$(t_2,\ldots,t_d)$. Next, note that the outside integral along the two vertical sides tends to zero as 
$R\rightarrow\infty$, due to the boundedness of $\omega_m$ and the sufficient power growth in the 
denominator of $\Psi_{m,h}$. Thus, (\ref{eq:Lag-FT}) and (\ref{eq:Cauchy1}) imply
\[
\widetilde{\chi_h} (y) = \frac{-1}{(2\pi)^d }
\int_{\mathbb{R}+i\gamma_1} e^{i y_1 z_1} \int_{\mathbb{R}^{d-1}} e^{i (y_2 t_2+\cdots+y_d t_d)} \, 
\Psi_{m,h} (z_1,t_2,\ldots,t_d) \, dt_d\ldots dt_2\, dz_1 .
\]
Repeating this argument for each of the remaining $d-1$ variables, we obtain, via Fubini's Theorem,
\[
\widetilde{\chi_h} (y) = \frac{(-1)^d}{(2\pi)^d }
\int_{\rd+i\gamma} e^{i (y_1 z_1+\cdots+y_d z_d)}  \, 
\Psi_{m,h} (z_1,\ldots,z_d) \, dz.
\]

On this integration contour, we use $z_p = t_p + i\gamma_p$ for $p=1,\ldots,d$, hence 
\[
\sum_{p=1}^d y_p z_p = yt + i \sum_{p=1}^d y_p \gamma_p 
= yt + i B \sum_{p=1}^d |y_p| ,
\]
which implies the estimate:
\[
|\widetilde{\chi_h} (y)| \leq \frac {e^{-B |y|}} {(2\pi)^d}
\int_{\rd} |\Psi_{m,h} (t+i\gamma)| \, dt.
\]
Therefore, to obtain (\ref{eq:scale-indep}), it is sufficient to prove the existence of a constant $C>0$, 
independent of $h$, such that 
\ben 
\int_{\rd} \frac {|\omega_{m} (t+i\gamma,h)|} {|q_h (t+i\gamma)|^m} \, dt \leq C, \quad h\in[0,1].
\label{eq:Lag-est2}
\een
To achieve this, we estimate the above integral by splitting it over two regions: $\|t\|\leq\alpha\sqrt{d}$ 
and $\|t\|>\alpha\sqrt{d}$.

In the first region, the restriction on $\alpha$ from the first line of the proof of Lemma~\ref{le:madych} 
implies $|t_p|\leq\|t\|\leq\alpha\sqrt{d}<\pi$ for all $p=1,\ldots,d$, hence $t+i\gamma\in Q_\alpha^d$.
Since the identity (\ref{eq:madych5}) shows that $\omega_{m} (t+i\gamma,h) [q_h (t+i\gamma)]^{-m}$ 
is a continuous (hence, bounded) function of $(t,h)$ on $Q^d\times[0,1]$, it follows that there exists 
$C_1:=C_1(d,m,\alpha)>0$, such that 
\ben 
\int_{\|t\|\leq\alpha\sqrt{d}} \frac {|\omega_{m} (t+i\gamma,h)|} {|q_h (t+i\gamma)|^m} \, dt \leq C_1, 
\quad h\in[0,1].
\label{eq:Lag-est3}
\een

For $\|t\|>\alpha\sqrt{d}$, we use the fact that (\ref{eq:madych5}) also implies the continuity of 
$\omega_m (z,h)$ as a function of $(z,h)$ on $Q_\alpha^d\times[0,1]$. By the $2\pi$-periodicity of 
$\omega_m$ in each component of $z$, we deduce the existence of a constant 
$C_2:=C_2 (d,m,B)>0$, such that 
\[
|\omega_{m} (t+i\gamma,h)| \leq C_2, \quad t\in\rd,\ h\in[0,1].
\]
Using this bound, coupled with the estimate, for $\|t\|>\alpha\sqrt{d}>B\sqrt{d}$:
\[
|q_h (t+i\gamma)| \geq \mathrm{Re}\, q_h (t+i\gamma) = \|t\|^2 - dB^2 + h^2 \geq \|t\|^2 - dB^2 > 0, 
\]
we obtain, since $2m>d$,
\ben 
\int_{\|t\| > \alpha\sqrt{d}} \frac {|\omega_{m} (t+i\gamma,h)|} {|q_h (t+i\gamma)|^m} \, dt 
\leq \int_{\|t\| > \alpha\sqrt{d}} \frac {C_2\, dt} {(\|t\|^2 - dB^2)^m} \leq C_3, 
\quad h\in[0,1],
\label{eq:Lag-est4}
\een
for some constant $C_3:=C_3(d,m,\alpha,B)<\infty$.

Thus, (\ref{eq:Lag-est3}) and (\ref{eq:Lag-est4}) imply (\ref{eq:Lag-est2}), 
which completes the proof.
\ \rule{0.5em}{0.5em}
%%%%EOP

%%%%
\section{Convergence rates}
%%%%
\label{sec:conv}
\addtocounter{equation}{-12}

This section uses the main result to obtain a uniform bound on the Lebesgue constant for 
non-stationary cardinal interpolation with the \mat\ kernel, which eventually enables the transfer of the 
approximation order of the flat ladder $\{\mathcal{S}^h (\Phi_h)\}_h$ over to the scaled cardinal 
interpolation scheme.

For each $h\in(0,1]$, let $I_h$  denote the interpolation operator taking any bounded function $f$ 
on \rd\ to its interpolant $I_h f$ generated with the kernel $\Phi$ on the scaled grid $h\zd$, i.e.
\ben
I_h f (x) := \sum_{j\in\zd} f(hj) \chi_h (x-hj),\quad  x\in\rd.
\label{eq:scaled-scheme}
\een
Since the change of variables $y=h^{-1}x$ equivalently expresses this operator as interpolation to 
any function $f(h\cdot)$ on the standard cardinal grid \zd\ by means of the kernel $\Phi_h$, it follows 
that all basic properties of cardinal interpolation on \zd\ (e.g. \cite[\S2.1]{ab20}) also apply to $I_h$. 
In particular, the norm (or `Lebesgue constant') of $I_h$ as a linear bounded operator on $L_\infty (\rd)$ 
can be expressed as:
\ben
\| I_h \|_\infty = \sup_{x\in\rd} \sum_{j\in\zd} | \chi_h (x-hj) |.
\label{eq:scaled-Leb}
\een
Thus, the scale-independent decay of $\widetilde{\chi_h} = \chi_h (h\cdot)$ established in the previous 
section leads to the following immediate consequence for Lebesgue constants.

%%%%
\begin{corollary}	\label{cor:Lebesgue}
The Lebesgue constant $\| I_h \|_\infty$ for non-stationary interpolation with the \mat\ kernel $\Phi$ 
on the scaled grid $h\zd$ admits a uniform bound for all $h\in(0,1]$, i.e.\ there exists 
$C:=C(d,m,\alpha,B)>0$, such that 
\ben
\| I_h \|_\infty \leq C,\quad  h\in(0,1].
\label{eq:Leb-bound}
\een
\end{corollary}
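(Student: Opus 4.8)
The plan is to obtain the uniform bound on $\|I_h\|_\infty$ directly from Theorem~\ref{th:main} via formula (\ref{eq:scaled-Leb}), using the change of variables that relates $\chi_h$ to $\widetilde{\chi_h}$. First I would substitute $y = h^{-1}x$ in the supremum defining $\| I_h \|_\infty$. Since $\chi_h(x-hj) = \widetilde{\chi_h}(h^{-1}x - j)$, the sum in (\ref{eq:scaled-Leb}) becomes $\sum_{j\in\zd} |\widetilde{\chi_h}(y-j)|$, and as $x$ ranges over \rd\ so does $y$; moreover, by $\zd$-translation it suffices to take the supremum over $y$ in the unit cube $[0,1]^d$. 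Thus
\[
\| I_h \|_\infty = \sup_{y\in\rd} \sum_{j\in\zd} |\widetilde{\chi_h}(y-j)| = \sup_{y\in[0,1]^d} \sum_{j\in\zd} |\widetilde{\chi_h}(y-j)|.
\]

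Next I would insert the pointwise estimate (\ref{eq:scale-indep}) from Theorem~\ref{th:main}. For $y\in[0,1]^d$ and $j\in\zd$ we have $|y-j| = \sum_{p=1}^d |y_p - j_p| \geq \sum_{p=1}^d (|j_p| - 1)$, so $|\widetilde{\chi_h}(y-j)| \leq A\, e^{-B|y-j|} \leq A\, e^{dB} e^{-B\sum_p |j_p|}$. Summing over $j\in\zd$ and using that $\sum_{j\in\zd} e^{-B|j|} = \bigl(\sum_{n\in\mathbb{Z}} e^{-B|n|}\bigr)^d = \bigl(\frac{1+e^{-B}}{1-e^{-B}}\bigr)^d < \infty$ gives
\[
\sum_{j\in\zd} |\widetilde{\chi_h}(y-j)| \leq A\, e^{dB} \left( \frac{1+e^{-B}}{1-e^{-B}} \right)^{d} =: C,
\]
a bound that depends only on $d$, $m$ (through $A$ and $B$), and the fixed choice of $B\in(0,\alpha)$, but not on $h$. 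Taking the supremum over $y\in[0,1]^d$ yields (\ref{eq:Leb-bound}), and since $A$, $B$ ultimately depend only on $d$ and $m$ (with $\alpha$ from Lemma~\ref{le:madych}), the stated dependence $C = C(d,m,\alpha,B)$ holds.

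There is essentially no hard step here: the corollary is a routine consequence of Theorem~\ref{th:main}. The only point requiring a little care is the reduction of the supremum over all of \rd\ to the compact cube $[0,1]^d$ — this uses the $\zd$-periodicity of the map $x \mapsto \sum_{j\in\zd}|\widetilde{\chi_h}(x-j)|$, which is immediate from reindexing the sum — and the elementary geometric-series estimate that turns the exponential pointwise decay into a summable bound uniform in $h$. One should also note explicitly that the absolute convergence of the series $\sum_{j}|\widetilde{\chi_h}(y-j)|$ (needed to justify the reindexing and the termwise estimate) is guaranteed by the same exponential bound, so no separate justification is needed. I would therefore present the argument in two or three short displayed lines, flagging that this is the promised ``immediate consequence'' of the scale-independent decay.
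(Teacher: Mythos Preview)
Your proof is correct and follows essentially the same approach as the paper: substitute $\chi_h = \widetilde{\chi_h}(h^{-1}\cdot)$ in (\ref{eq:scaled-Leb}), apply the scale-independent exponential bound of Theorem~\ref{th:main}, and observe that the resulting sum is finite and independent of $h$. The paper's version is a one-line estimate without the reduction to $[0,1]^d$ or the explicit geometric-series evaluation, but the underlying argument is identical.
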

%%%%

%%%
\begin{proof}
Since $\chi_h (x) = \widetilde{\chi_h} (h^{-1} x)$, from (\ref{eq:scaled-Leb}) and 
Theorem~\ref{th:main} we obtain:
\[
\| I_h \|_\infty \leq \sup_{x\in\rd} \sum_{j\in\zd} A e^{-B\|h^{-1}x-j\|} 
= A\, \sup_{y\in\rd} \sum_{j\in\zd} e^{-B\|y-j\|} =: C < \infty,
\]
as required.
\end{proof}
%%%
\bigskip

The next theorem specializes to the \mat\ kernel $\Phi$ a result of de Boor and Ron \cite{br92} on the 
approximation order of the non-stationary ladder $\{\mathcal{S}^h (\Phi_h)\}_h$. We employ the notation
\[
\mathrm{dist}(f,\mathcal{A};X) := \inf_{s\in\mathcal{A}} \| f-s \|_X,
\]
for the distance from a function $f$ to a set of functions $\mathcal{A}$, measured in the norm of a 
space $X$. Also, we denote by $L^{2m,1}$ the Bessel potential space consisting of all functions 
$f\in L_1(\rd)$, such that $(1+\|\cdot\|^{2})^m\widehat{f} \in L_1 (\rd)$. Note that the Schwartz class 
of rapidly decaying smooth functions on \rd\ is a subspace of $L^{2m,1}$. Further, 
we let 
\ben
\mathcal{S}^h_0 (\Phi_h) := \mathrm{span} \{ \Phi_h (h^{-1}\cdot - j) : j\in\zd \},
\label{eq:span}
\een
the space of finite linear combinations of the translates $\{ \Phi (\cdot - hj) : j\in\zd \}$.

%%%%
\begin{theorem}	\label{th:AOFL}
If $f\in L^{2m,1}$, there exists $C_f:=C(f,d,m)>0$, such that
\ben
\mathrm{dist} (f,\mathcal{S}^h_0 (\Phi_h); L_\infty) 
= \mathrm{dist} (f,\mathcal{S}^h (\Phi_h); L_\infty) 
\leq C_f \, h^{2m},\quad h\in(0,1].
\label{eq:AOFL}
\een
\end{theorem}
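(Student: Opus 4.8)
The plan is to invoke the general approximation-order result of de Boor and Ron \cite{br92} for the non-stationary ladder generated by a kernel whose Fourier transform has a prescribed behaviour near the origin, and then to verify that the \mat\ kernel $\Phi$ meets the hypotheses with the exponent $2m$. The first observation is that the equality of the two distances in (\ref{eq:AOFL}) is essentially automatic: a bounded function can be approximated in $L_\infty$ by finite sums of the translates $\Phi(\cdot-hj)$ at least as well as by the full series in $\mathcal{S}^h(\Phi_h)$ (since $\mathcal{S}^h_0(\Phi_h)\subset\mathcal{S}^h(\Phi_h)$ gives ``$\geq$''), while the exponential decay (\ref{eq:kernel-decay}) of $\Phi$ lets one truncate any $\ell_\infty$-series to a finite sum with $L_\infty$-error as small as we please (giving ``$\leq$''). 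So the substance is the upper bound $C_f h^{2m}$ for $\mathrm{dist}(f,\mathcal{S}^h(\Phi_h);L_\infty)$.

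For the bound itself, I would follow the Fourier-analytic route of \cite{br92}. Recall from (\ref{eq:kernel-FTh}) that $\widehat{\Phi_h}(t)=\rho_{m,d}h^{2m-d}(h^2+\|t\|^2)^{-m}$. After the change of variables $y=h^{-1}x$, approximating $f$ from $\mathcal{S}^h(\Phi_h)$ on $h\zd$ is the same as approximating $f(h\cdot)$ from $\mathcal{S}(\Phi_h)$ on $\zd$; the natural quasi-interpolant to estimate is the one whose symbol is chosen so that its Fourier transform matches $\widehat{f(h\cdot)}$ to the correct order at the origin and is periodically well-behaved away from it. The standard estimate expresses the error via
\[
\widehat{f}(t)\Bigl(1-\frac{\widehat{\Phi_h}(t)}{\sum_{k}\widehat{\Phi_h}(t+2\pi h k)\,\widehat{f}(t+2\pi h k)/\widehat{f}(t)}\Bigr)
\]
type expressions; the key point is that, near $t=0$, $\widehat{\Phi_h}(t)^{-1}\,\widehat{\Phi_h}(t) - 1$ vanishes and the leading discrepancy is governed by the ratio $\widehat{\Phi_h}(t+2\pi hk)/\widehat{\Phi_h}(t)=\bigl((h^2+\|t\|^2)/(h^2+\|t+2\pi hk\|^2)\bigr)^m$, which for $t$ of size $O(h)$ is of size $O\bigl((h^2/\|2\pi hk\|^2)^m\bigr)=O(h^{2m}\|k\|^{-2m})$. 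Summing over $k\neq0$ converges since $2m>d$, and multiplying by the available norm of $\widehat{f}$ — here the $L_1$-norm of $(1+\|\cdot\|^2)^m\widehat{f}$, finite precisely because $f\in L^{2m,1}$ — and integrating in $t$ yields the factor $h^{2m}$. I would cite \cite[Theorem 2.1 and its corollaries]{br92} for the precise form of this estimate, after checking that $\widehat{\Phi}$ satisfies the admissibility and growth conditions there (it is smooth, positive, behaves like a constant at the origin, and decays like $\|t\|^{-2m}$ at infinity with all the derivative bounds that decay entails).

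The main obstacle is purely bookkeeping: matching the hypotheses of \cite{br92} — which are phrased for a fixed kernel $\phi$ with $\widehat\phi$ admissible and a non-stationary scaling $\phi(h\cdot)$ — to our normalization, and tracking that the constant produced depends on $f$ only through $\|(1+\|\cdot\|^2)^m\widehat{f}\|_{L_1}$ and on $d,m$ through $\rho_{m,d}$ and the convergent tail sum $\sum_{k\neq0}\|k\|^{-2m}$. No delicate new estimate is needed beyond what is already in \cite{br92}; the role of our Theorem \ref{th:main} and Corollary \ref{cor:Lebesgue} is not in proving this theorem but in the next step, where the approximation order $O(h^{2m})$ from $\mathcal{S}^h(\Phi_h)$ is transferred to the actual interpolation operator $I_h$ via the uniform Lebesgue-constant bound and the identity $f-I_h f=(I-I_h)(f-s)$ for any $s\in\mathcal{S}^h(\Phi_h)$ with $s=I_h s$.
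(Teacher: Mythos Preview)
Your approach to the upper bound $\mathrm{dist}(f,\mathcal{S}^h(\Phi_h);L_\infty)\leq C_f h^{2m}$ is essentially the paper's: both invoke the de Boor--Ron machinery of \cite{br92}, and the key estimate you sketch is precisely the \emph{synthesis condition of order $2m$} that the paper verifies explicitly, namely
\[
\sup_{\|t\|\leq\delta}\sum_{j\in\zd\setminus 0}\frac{|\widehat{\Phi_h}(ht+2\pi j)|}{|\widehat{\Phi_h}(ht)|}
\leq \sum_{j\neq 0}\frac{h^{2m}(1+\delta^2)^m}{(2\pi\|j\|-\delta)^{2m}}=O(h^{2m}).
\]
The paper cites Theorem~2.37 of \cite{br92} rather than Theorem~2.1, and it is more careful than your sketch in listing the remaining hypotheses (boundedness of the semi-discrete convolution operator via (\ref{eq:minimal}), and the $2m$-admissibility of $f$, i.e.\ $(1+\|\cdot\|^{2m})\widehat{f}\in L_1$), but the substance is the same.

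There is, however, a genuine gap in your argument for the \emph{equality} of the two distances. You claim that ``the exponential decay of $\Phi$ lets one truncate any $\ell_\infty$-series to a finite sum with $L_\infty$-error as small as we please''. This is false: take $c_j\equiv 1$, so that $s(x)=\sum_{j}\Phi(x-hj)$ is a nonzero periodic function; the truncation $s_N=\sum_{|j|\leq N}\Phi(\cdot-hj)$ satisfies $\|s-s_N\|_\infty\geq\sup_{|x|\gg N}|s(x)|$, which does not tend to zero. Exponential decay of $\Phi$ controls the tail \emph{locally} (for fixed $x$), not uniformly. What is actually needed is that $f$ itself vanishes at infinity---true here because $f\in L^{2m,1}$ forces $\widehat{f}\in L_1$, hence $f\in C_0(\rd)$---so that near-optimal approximants from $\mathcal{S}^h(\Phi_h)$ can be modified far from the origin without spoiling the error. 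This is exactly the content of Johnson's result \cite[Proposition~2.2]{mjj97a}, which the paper invokes; your direct truncation shortcut does not work without incorporating this decay of $f$.
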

%%%%

%%%
\begin{proof}
The right-side inequality expresses the fact that $\{\mathcal{S}^h (\Phi_h)\}_h$ provides 
$L_\infty$-approximation of order $2m$. This result follows from \cite[Theorem~2.37]{br92}, which 
holds under three general assumptions. Firstly, the family $\{\Phi_h\}_h$ should satisfy the so-called 
\emph{synthesis condition of order $2m$}. 
In the case of the \mat\ kernel, this condition is equivalent to the existence of $\delta>0$, such that 
\ben
\sup_{\| t \| \leq \delta} \sum_{j\in\zd\setminus0} 
\frac {|\widehat{\Phi_h} (ht + 2\pi j)|} {|\widehat{\Phi_h} (ht)|} = O (h^{2m}).
\label{eq:synthesis}
\een
Since $2m>d$, (\ref{eq:synthesis}) is satisfied, for a sufficiently small $\delta$, due to the estimate:
\[
\left| \frac {\widehat{\Phi_h} (ht + 2\pi j)} {\widehat{\Phi_h} (ht)} \right| 
= \frac {h^{2m} (1+\|t\|^2)^m} {(h^2 + \|ht+2\pi j\|^2)^m} 
\leq \frac {h^{2m} (1+\delta^2)^m} {\|ht+2\pi j\|^{2m}} 
\leq \frac {h^{2m} (1+\delta^2)^m} {(2\pi\|j\|-\delta)^{2m}} .
\]
A second assumption that needs to be verified is the boundedness of the semi-discrete convolution 
operator generated by $\Phi_h$, for each $h$. Using \cite[Proposition~2.3]{br92}, this is ensured by 
condition (\ref{eq:minimal}), due to the exponential decay of $\Phi_h$. Thirdly, the 
function $f$ needs to be $2m$-`admissible', i.e.\ $(1+\|\cdot\|^{2m})\widehat{f} \in L_1 (\rd)$, which is 
easily verified for any $f\in L^{2m,1}$.

As for the left-side equality of (\ref{eq:AOFL}), which transfers the rate of convergence to approximation 
with finite, rather than infinite, linear combinations of the translates $\{ \Phi (\cdot - hj) : j\in\zd \}$, this 
follows from a result of Johnson \cite[Proposition~2.2]{mjj97a}, due to condition (\ref{eq:minimal}) 
satisfied by $\Phi_h$, and to the fact that any function $f\in L^{2m,1}$ is necessarily bounded and has 
the limit $0$ at infinity.
\end{proof}
%%%
\bigskip

\noindent
%%%
\textbf{Remarks.} 
(i) The convergence rate $2m$ for approximation from a non-stationary ladder 
generated by the \mat\ kernel $\Phi=\Phi_{m,d}$ defined in (\ref{eq:matern}) also follows from the more 
general result of Johnson \cite[Theorem~3.7]{mjj00}. That result replaces the \zd-shifts by a set of 
translations $\Xi$ which is a sufficiently small perturbation of \zd, it considers errors in $L_p$-norms, 
for all $1\leq p\leq \infty$, and it applies to sufficiently smooth functions $f$ from a Besov space.

(ii) Note that, for approximation of functions in Bessel potential spaces by \emph{finite} linear 
combinations of quasi-uniform translates of $\Phi_{m,d}$, Ward \cite[\S3.1]{ward13} and Ward and 
Unser \cite[\S3.2]{WU14} obtain the convergence rate $O(h^{2m})$, if $d$ is odd, and the slower rate 
$O(h^{2m-1})$, if $d$ is even. Hence, Theorem~\ref{th:AOFL} improves this rate for even $d$ and 
a uniform grid. In \cite{ward13} and \cite{WU14}, $L_p$-error bounds, as well as kernels of non-integer 
order, are also considered. 
%
%%%
\medskip

The uniform bound on the Lebesgue constant given in Corollary~\ref{cor:Lebesgue} can now be used to 
transfer the convergence order of Theorem~\ref{th:AOFL} to non-stationary interpolation on the grid $h\zd$ 
with the \mat\ kernel $\Phi$.

 %%%%
\begin{corollary}	\label{cor:AOCI}
If $f\in L^{2m,1}$, there exists a constant $\widetilde{C}_f:=\widetilde{C} (f,d,m,\alpha,B)$, such that the 
interpolant \emph{(\ref{eq:scaled-scheme})} satisfies
\ben
\| f - I_h f \|_\infty \leq \widetilde{C}_f \, h^{2m},\quad h\in(0,1].
\label{eq:AOCI}
\een
\end{corollary}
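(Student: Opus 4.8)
The plan is to invoke the classical quasi-optimality estimate: the interpolation error is controlled by the best-approximation error, with the Lebesgue constant as the amplification factor. Since Corollary~\ref{cor:Lebesgue} bounds that factor uniformly in $h$ and Theorem~\ref{th:AOFL} controls the best-approximation error, the conclusion follows in a few lines.

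The one preliminary fact I would record is the reproduction identity $I_h s = s$ for every $s\in\mathcal{S}^h(\Phi_h)$. Indeed, any such $s$ is bounded and continuous (by the remarks following (\ref{eq:minimal})), so $I_h s$ is well defined by (\ref{eq:scaled-scheme}); moreover $s$ itself is an element of $\mathcal{S}^h(\Phi_h)$ matching the data $\{s(hj)\}_{j\in\zd}$ on $h\zd$, and since scaled cardinal interpolation with $\Phi$ is correct (the symbol being strictly positive, cf.\ (\ref{eq:PSF})), this interpolant is unique, whence $I_h s = s$.

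Now fix $f\in L^{2m,1}$; as noted in the proof of Theorem~\ref{th:AOFL}, $f$ is bounded. For an arbitrary $s\in\mathcal{S}^h(\Phi_h)$, the function $f-s$ is bounded, so, using $I_h s = s$ and linearity, $f - I_h f = (f-s) - I_h(f-s)$, and therefore, by the operator-norm identity (\ref{eq:scaled-Leb}) and Corollary~\ref{cor:Lebesgue},
\[
\| f - I_h f \|_\infty \leq \bigl( 1 + \| I_h \|_\infty \bigr)\, \| f-s \|_\infty \leq (1+C)\, \| f-s \|_\infty, \quad h\in(0,1].
\]
Taking the infimum over $s\in\mathcal{S}^h(\Phi_h)$ gives $\| f - I_h f \|_\infty \leq (1+C)\,\mathrm{dist}(f,\mathcal{S}^h(\Phi_h);L_\infty)$, and Theorem~\ref{th:AOFL} bounds the right-hand side by $(1+C)\,C_f h^{2m}$. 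Setting $\widetilde{C}_f := (1+C)\,C_f$ — which depends only on $f,d,m,\alpha,B$, since $C=C(d,m,\alpha,B)$ and $C_f=C(f,d,m)$ — yields (\ref{eq:AOCI}).

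There is no serious obstacle here: the substance is entirely contained in Corollary~\ref{cor:Lebesgue} (hence in Theorem~\ref{th:main}) and in Theorem~\ref{th:AOFL}, both of which may be used as given. The only points requiring a moment's attention are the reproduction identity $I_h s = s$ and the bookkeeping that keeps every function to which $I_h$ is applied bounded, so that the operator-norm bound (\ref{eq:scaled-Leb})–(\ref{eq:Leb-bound}) genuinely applies.
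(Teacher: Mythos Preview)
Your proof is correct and follows essentially the same route as the paper: both use the classical Lebesgue-constant quasi-optimality inequality $\|f - I_h f\|_\infty \leq (1+\|I_h\|_\infty)\,\|f-s\|_\infty$, invoke the reproduction property $I_h s = s$ for $s\in\mathcal{S}^h(\Phi_h)$, and then appeal to Corollary~\ref{cor:Lebesgue} and Theorem~\ref{th:AOFL}. The only difference is that the paper cites \cite[Corollary~2.2]{ab20} for the projection identity, whereas you justify it directly from correctness of cardinal interpolation.
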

%%%%

%%%
\begin{proof}
We employ a classical estimate based on the Lebesgue constant, which relates the error at 
interpolating $f$ by $I_h f$ on $h\zd$ to the error for \emph{approximating} $f$ by any 
$s_h\in\mathcal{S}^h (\Phi_h)$:
\begin{eqnarray*}
\| f - I_h f \|_\infty & \leq & \| f - s_h \|_\infty + \| s_h - I_h f \|_\infty   \\ 
& \leq & ( 1 + \| I_h \|_\infty )\, \| f - s_h \|_\infty,
\end{eqnarray*}
where we have used the projection property $I_h s_h = s_h$ (see \cite[Corollary 2.2]{ab20}).
Now (\ref{eq:Leb-bound}) and (\ref{eq:AOFL}) imply (\ref{eq:AOCI}). 
\end{proof}
%%%

%%%%
\section{Compactly supported perturbation kernels}
%%%%
\label{sec:comp-supp}
\addtocounter{equation}{-7}

In this section, we transfer the approximation result of Theorem~\ref{th:AOFL} to the compactly 
supported radial kernels introduced and studied independently by Johnson \cite{mjj12} and 
Ward and Unser \cite{WU14}, and we discuss examples of such kernels.

We start by recalling (e.g.\ \cite[Theorem~5.26]{wend05}) that, if $\Psi=\psi (\|\!\cdot\!\|)$ is an integrable 
radial function on \rd, with profile $\psi$ defined on $[0,\infty)$, then its Fourier transform 
$\widehat{\Psi}$ is also radial, namely $\widehat{\Psi} = (F_d\psi) (\|\!\cdot\!\|)$, with profile given by
\[
(F_d\psi) (r) = r^{1-\frac{d}{2}} \int_0^\infty \psi (t) \, t^{d/2} J_{\frac{d}{2} - 1} (rt) \, dt, 
\quad r>0, 
\]
where $J_\nu$ is the Bessel function of the first kind of order $\nu$.

For $m>d/2$, Ward and Unser \cite{WU14} defined the class of continuous profiles 
$\psi : [0,\infty)\to\mathbb{R}$, such that the $d$-dimensional Fourier transform of the radial 
function $\Psi:=\psi(\|\!\cdot\!\|)$ has a profile of the form 
\ben
(F_d\psi) (r) = C r^{-2m}\lambda (r), \quad r>0,
\label{eq:perturbFT}
\een
where $C>0$ and $\lambda : [0,\infty)\to\mathbb{R}$ satisfies three conditions: 
\begin{enumerate}
\item \label{C1}
There exist a positive integer $K$, a set of nodes $0<r_1<r_2<\cdots<r_K$, and a sequence of 
real coefficients $\{a_j\}_{j=1}^K$, such that: 
\ben
\lambda (r) = 1 + \sum_{j=1}^K a_j (r_j r)^{1-\frac{d}{2}} J_{\frac{d}{2}-1} (r_j r),\quad r\geq 0.
\label{eq:lambda}
\een
\item \label{C2}
There exists $\lim_{r\to 0^+} (F_d\psi) (r) =:\beta > 0$.
\item \label{C3}
$\lambda(r) > 0$ for $r>0$.
\end{enumerate}
As detailed in the last part of this section, examples of profiles $\psi$ satisfying the above conditions 
have first been constructed by Johnson \cite{mjj12}. 

Building on the Paley-Wiener approach of Baxter \cite{bax08,bax11} for kernel engineering, Ward and Unser 
\cite[Proposition 2.2]{WU14} proved that conditions \ref{C1} and \ref{C2} above imply that $\psi$ must 
be compactly supported. Further, in \cite[Lemma~2.3]{WU14}, they showed that a radial function $\Psi$ 
whose Fourier transform profile has the form (\ref{eq:perturbFT}) is a `perturbation' of $\Phi$, the 
$d$-dimensional \mat\ kernel (\ref{eq:matern}) of corresponding parameter $m$, in the sense that it 
can be expressed as a convolution 
\ben
\Psi = \mu * \Phi,
\label{eq:perturb}
\een
for an invertible finite Borel measure $\mu$. Using this fact, \cite[Theorem 3.11]{WU14} proved 
that such a kernel $\Psi$ satisfies similar approximation properties as $\Phi$.
Consequently, the convergence rate of Theorem~\ref{th:AOFL} can also be transferred to approximation 
by finite linear combinations of translates of $\Psi_h$.

 %%%%
\begin{corollary}	\label{cor:AOPSI}
Let $\Psi$ be a compactly supported kernel, with a Fourier transform profile of the form 
\emph{(\ref{eq:perturbFT})}. If $f\in L^{2m,1}$, there exists $\check{C}_f:=\check{C}(f,d,m)>0$, 
such that
\ben
\mathrm{dist} (f,\mathcal{S}^h_0 (\Psi_h); L_\infty) 
 \leq \check{C}_f \, h^{2m},\quad h\in(0,1],
\label{eq:AOPSI}
\een
where $\mathcal{S}^h_0 (\Psi_h)$ is defined analogously to \emph{(\ref{eq:span})}.
\end{corollary}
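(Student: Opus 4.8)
The plan is to reduce the claim for $\Psi$ to the already-established result for the \mat\ kernel $\Phi$ (Theorem~\ref{th:AOFL}), exploiting the convolution identity $\Psi=\mu*\Phi$ from \cite[Lemma~2.3]{WU14} together with its invertibility. First I would observe that since $\Psi$ has Fourier transform profile of the form (\ref{eq:perturbFT}) with $\lambda$ satisfying conditions \ref{C1}--\ref{C3}, the cited results of Ward and Unser guarantee both that $\Psi$ is compactly supported (so that $\Psi_h$ satisfies the analogue of (\ref{eq:minimal})) and that $\widehat{\Psi}=\widehat\mu\,\widehat\Phi$ for an invertible finite Borel measure $\mu$, meaning $\widehat\mu$ is bounded away from $0$ and $\infty$ on \rd. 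Dilating, one gets $\Psi_h(h^{-1}\cdot)=\Psi(\cdot)=(\mu*\Phi)(\cdot)$, and more usefully $\Psi(\cdot-hj)=\int \Phi(\cdot-hj-t)\,d\mu(t)$, so a finite linear combination of translates of $\Psi$ is an ``averaged'' finite combination of translates of $\Phi$; conversely, invertibility of $\mu$ lets one write translates of $\Phi$ in terms of translates of $\Psi$.

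The key step is to convert an approximant from $\mathcal{S}^h_0(\Phi_h)$ into one from $\mathcal{S}^h_0(\Psi_h)$ at comparable cost. Given $f\in L^{2m,1}$, Theorem~\ref{th:AOFL} produces $s_h\in\mathcal{S}^h_0(\Phi_h)$ with $\|f-s_h\|_\infty\le C_f h^{2m}$. I would instead first apply Theorem~\ref{th:AOFL} to the auxiliary function $g:=\mu^{-1}*f$ (which still lies in $L^{2m,1}$, since convolution with the inverse measure multiplies $\widehat f$ by $\widehat\mu^{-1}$, a bounded smooth multiplier — here one should check $g\in L^{2m,1}$ carefully, using that $\widehat\mu$ and $1/\widehat\mu$ are bounded and that $(1+\|\cdot\|^2)^m\widehat g=\widehat\mu^{-1}(1+\|\cdot\|^2)^m\widehat f\in L_1$). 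This gives $t_h\in\mathcal{S}^h_0(\Phi_h)$, $t_h=\sum_{j} c_j\,\Phi(\cdot-hj)$, with $\|g-t_h\|_\infty\le C_g h^{2m}$. Convolving with $\mu$ and using $\mu*\Phi(\cdot-hj)=\Psi(\cdot-hj)$, the function $\sigma_h:=\sum_j c_j\,\Psi(\cdot-hj)\in\mathcal{S}^h_0(\Psi_h)$ satisfies $\sigma_h=\mu*t_h$, hence $\|f-\sigma_h\|_\infty=\|\mu*(g-t_h)\|_\infty\le\|\mu\|\,\|g-t_h\|_\infty\le\|\mu\|\,C_g\,h^{2m}$, where $\|\mu\|$ is the total variation norm. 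Setting $\check C_f:=\|\mu\|\,C_g$ yields (\ref{eq:AOPSI}).

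The main obstacle is making the convolution manipulations rigorous at the level of the infinite/finite series and verifying that passing $f\mapsto\mu^{-1}*f$ keeps us inside $L^{2m,1}$; in particular one must confirm that $\mu^{-1}*f$ is well-defined and that its Fourier transform is $\widehat\mu^{-1}\widehat f$ with the required integrability against $(1+\|\cdot\|^2)^m$. Two routes handle this: either invoke \cite[Theorem~3.11]{WU14} directly, which already packages the statement that $\Psi$ inherits $\Phi$'s approximation properties (in which case the corollary is essentially a restatement specialized to uniform grids and $L_\infty$), or carry out the short self-contained argument above. I would present the self-contained version but cite \cite[Lemma~2.3]{WU14} and \cite[Theorem~3.11]{WU14} for the structural facts about $\mu$, and note that the finite-versus-infinite reduction is the one already used in Theorem~\ref{th:AOFL} via \cite[Proposition~2.2]{mjj97a}, since $\Psi_h$ also satisfies condition (\ref{eq:minimal}).
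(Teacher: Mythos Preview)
Your proposal is correct and follows essentially the same approach as the paper's own proof: both reduce to Theorem~\ref{th:AOFL} applied to the auxiliary function $\nu*f=\mu^{-1}*f$, then convolve back with $\mu$ to pass from $\mathcal{S}^h_0(\Phi_h)$ to $\mathcal{S}^h_0(\Psi_h)$, with the only point to check being that $f\in L^{2m,1}$ implies $\mu^{-1}*f\in L^{2m,1}$ via boundedness of $\widehat{\mu}^{-1}$. The paper defers the convolution estimate to the proof of \cite[Theorem~3.11]{WU14}, whereas you spell it out explicitly, but the underlying argument is the same.
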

%%%%

%%%
\begin{proof}
Let $\nu:=\mu^{-1}$, where $\mu$ is the invertible finite Borel measure from (\ref{eq:perturb}).
The proof of (\ref{eq:AOPSI}) follows from the estimates given in the proof of \cite[Theorem 3.11]{WU14}, 
which relate the error of approximating $f$ from $\mathcal{S}^h_0 (\Psi_h)$ to the error of approximating 
$\nu*f$ from $\mathcal{S}^h_0 (\Phi_h)$. Thus, one only needs to ensure that $f\in L^{2m,1}$ implies 
$\nu*f\in L^{2m,1}$, which is seen to hold, due to the boundedness of the Fourier transform $\widehat{\nu}$.
\end{proof}
%%%
\bigskip

\noindent
%%%
\textbf{Remark.} 
For $d\geq 3$, the approximation order $2m$ from the non-stationary ladder generated by the 
\emph{perturbed} shifts of the kernel $\Psi$ can also be obtained directly from the general result of Johnson 
\cite[Theorem~3.7]{mjj00} described after Theorem~\ref{th:AOFL}. Indeed, for this kernel, the hypotheses 
of Johnson's theorem are seen to be verified due to the form of the Fourier profile (\ref{eq:perturbFT}) 
and the asymptotic properties of the Bessel functions which appear in (\ref{eq:lambda}).
%%%
\bigskip

In the remaining part of this section, we show that two of the families of profiles constructed by Johnson 
\cite{mjj12}, for $d=2$ and $d=3$, belong to the above class defined by Ward and Unser. Note that, in 
place of the Paley-Wiener approach, these constructions use an L-spline approach based on the finite 
dimensional representations of piecewise polyharmonic radial functions developed in \cite{mjj98}.
\bigskip

\noindent
%%%
\textbf{Johnson's compactly supported profiles $\{\eta_m\}$ for $d=2$.} 
This family of profiles is described in \cite[\S3]{mjj12}. For each integer $m\geq 1$, $\eta_m$ is a 
piecewise defined function with nodes at $0, 1, 2,\ldots, m$, such that $\eta_m = 0$ on $(m,\infty)$, 
$\eta_m \in C^{2m-2} (0,\infty)$, and each non-trivial piece of $\eta_m$ belongs to the $2m$-dimensional 
null-space of $L^m$, where $L= \frac{d^2}{dr^2} + \frac{1}{r} \frac{d}{dr}$ is the radial Laplacean in 
$\mathbb{R}^2$.
Up to a suitable normalization, $\eta_m$ is uniquely determined by imposing certain $m-1$ boundary 
conditions on its restriction to the first interval $(0,1)$. Therefore $\eta_m (\|\!\cdot\!\|)$ is a 
compactly supported, piecewise $m$-harmonic, radially symmetric function on $\mathbb{R}^2$. It follows  
from \cite[(3.4)]{mjj12} that (\ref{eq:lambda}) holds with $K=m$ for $\lambda:=(\cdot)^{2m} F_2 \eta_m$, 
since
\ben
(F_2 \eta_m) (r) = \frac{4^{m-1}[(m-1)!]^2}{r^{2m}} 
\left( 1+ \sum_{j=1}^m a_j J_0 (jr) \right), \quad r>0,
\label{eq:etaFT}
\een
where $\{a_j\}_{j=1}^m$ are uniquely determined by the fact that
 $\lim_{r\to0^+} (F_2 \eta_m) (r)=:\beta\in\mathbb{R}$ 
exists (as $\eta_m$ has compact support). Next, using an integral representation of $J_0$, Johnson 
\cite[(3.5)]{mjj12} proves the remarkable property that $(F_2 \eta_m) (r)>0$ for $r>0$, i.e.\ condition 
\ref{C3} holds for $\lambda$. Since $K=m$ and $F_2 \eta_m$ is an entire function, it also follows that 
$\beta>0$ is satisfied automatically, hence $\lambda$ satisfies all three conditions listed after 
(\ref{eq:perturbFT}). 

For $m=2$, the two nontrivial pieces of $\eta_2$ are displayed explicitly in \cite{mjj12} as:
\ben
\eta_2 (t) = \frac{1}{3} \left\{ 
\begin{array}{ll}
4\ln 2 + (\ln 2 - 3) t^2 + 3t^2 \ln t, & t\in(0,1], \\
(4\ln 2-4) - 4\ln t + (\ln 2 + 1) t^2 - t^2 \ln t, & t\in(1,2].
\end{array}
\right.
\label{eq:eta2}
\een
The scaled profile $\frac{3}{4\ln2} \eta_2 (2t)$, supported on $[0,1]$, was also obtained by Ward and 
Unser in \cite[Example 2.4]{WU14}. Further, $\eta_2(\|\!\cdot\!\|)$ was identified in \cite{ab16} as a 
radially symmetric thin plate spline.

\bigskip

\noindent
%%%
\textbf{Johnson's compactly supported profiles $\{\psi_{3,m}\}$ for $d=3$.} 
For each integer $m\geq1$, let $\psi_m$ be the restriction to $[0,\infty)$ of the polynomial B-spline 
which has a double knot at $0$ and simple knots at $\pm1,\ldots,\pm m$. This family of B-splines 
was studied by Al-Rashdan and Johnson \cite{rj12}, who proved:
\ben
(F_1 \psi_m) (r) = \frac{\delta_m}{r^{2m}} 
\left( 1+ \sum_{j=1}^m b_j \frac{\sin (jr)}{r} \right) > 0, \quad r>0,
\label{eq:psiFT}
\een
where $\delta_m>0$ and the coefficients $\{b_j\}_{j=1}^m$ are uniquely determined by the fact that 
$\lim_{r\to0^+} (F_1 \psi_m) (r)=:\beta\in\mathbb{R}$ exists. 

As part of a larger class of profiles, the family $\{\psi_{3,m}\}$ was defined by Johnson \cite[\S8]{mjj12} 
via
\ben
\psi_{3,m} := \mathcal{D} \psi_m, \quad m\geq1,
\label{eq:psi3m}
\een
where $\mathcal{D} = - \frac{1}{r} \frac{d}{dr}$. Hence, the `dimension walk' formula 
$F_3 (\mathcal{D} \psi_m) = F_1 \psi_m$, the identity
$r^{-1/2} J_{1/2} (r) = \sqrt{\frac{2}{\pi}} \,\frac{\sin r}{r}$, and the last two displays imply 
\ben
(F_3\psi_{3,m}) (r) = \frac{\delta_m}{r^{2m}} 
\left( 1+ \sqrt{\frac{\pi}{2}} \sum_{j=1}^m jb_j (jr)^{-1/2} J_{1/2} (jr) \right) > 0, \quad r>0.
\label{eq:psi3mFT}
\een
It follows that, for $d=3$, the profile $F_3\psi_{3,m}$ is of the form (\ref{eq:perturbFT}), with $K=m$. 
Further, by \cite[Theorem 6.1]{mjj12}, $\psi_{3,m} (\|\!\cdot\!\|)$ is a 
compactly supported, piecewise $m$-harmonic, radially symmetric function on $\mathbb{R}^3$. 

In the special case $m=2$, \cite{rj12} gives the explicit expression
\ben
\psi_2 (t) = \left\{ \begin{array}{ll}
8 - 24 t^2 + 24 t^3 - 7 t^4, & t\in[0,1], \\
(2 - t)^4 , & t\in(1,2],
\end{array}
\right.
\label{eq:psi2}
\een
from which the expression of $\psi_{3,2}$ can be calculated via (\ref{eq:psi3m}). The scaled version 
$\frac{1}{48} \psi_{3,2} (2r)$ is also provided as Example 2.5 by Ward and Unser \cite{WU14}, while 
their Example 2.6 is seen to coincide with $\frac{1}{540} \psi_{3,3} (3r)$.
\bigskip

\noindent
%%%
\textbf{Remarks.}  
(i) It follows from the proofs of \cite[Theorem 2.8]{rj12} and \cite[Theorem 3.11]{mjj12}, that the three 
conditions imposed on the function $\lambda$ of (\ref{eq:perturbFT}) imply that the corresponding 
profile $\psi$ has `Sobolev regularity' $(d,m)$, i.e. 
\[
A (1+r^2)^{-m} \leq (F_d \psi) (r) \leq B (1+r^2)^{-m}, \quad r>0,
\]
for some constants $B\geq A>0$. Hence, a natural application of such profiles $\psi$ may occur in the 
field of multilevel interpolation algorithms; cf.\ Farrell et al.\ \cite{fgw17}.  Another potential application, 
to tomographic image reconstruction via X-ray transform, is discussed by Ward and Unser  
\cite{WU14}, where the plots of the profiles $\eta_2$ and $\psi_{3,2}$, rescaled on the support 
interval $[0,1]$, are also presented. 

(ii) Since a function $\lambda$ of the form (\ref{eq:lambda}) is even and entire, the condition that 
$\lim_{r\to0^+} (F_d \psi) (r)$ exists is seen to imply $K\geq m$. For $K>m$, currently there do not 
seem to be any constructions of such functions $\lambda$ satisfying the three conditions described 
at the beginning of the section. For $K=m$, as shown by Johnson \cite[(3.2)]{mjj12}, the coefficients 
$\{a_j\}$ of (\ref{eq:lambda}) are uniquely determined by the requirement that 
$\lim_{r\to0^+} (F_d \psi) (r)$ exists. Hence, it is remarkable that the above two families of profiles 
$\{\eta_m\}$ and $\{\psi_{3,m}\}$, for $d=2$ and $d=3$, also satisfy the extra condition that 
$\lambda(r)>0$ for $r>0$. As already noted above, this automatically ensures 
$\lim_{r\to0^+} (F_d \psi) (r)>0$.
\bigskip

\noindent
\textbf{Acknowledgements.} 
I am grateful to Michael Johnson for persuading me to consider the non-stationary flat ladder 
and suggesting that Theorem~\ref{th:AOFL} could be obtained from \cite{br92}. I also thank 
Thomas Hangelbroek for several stimulating discussions on \mat\ kernel interpolation.

\end{document}